\def\input@path{{figures/}}\makeatother
\newtheorem{theorem}{Theorem}[section]
\newtheorem{proposition}[theorem]{Proposition}
\newtheorem{lemma}[theorem]{Lemma}
\theoremstyle{definition}
\newtheorem{definition}[theorem]{Definition}
\newtheorem{remark}[theorem]{Remark}
\crefname{notation}{Notation}{Notations}
\crefname{problem}{Problem}{Problems}
\newcommand{\T}{\mathcal{T}} 
\newcommand{\A}{\mathcal{A}} 
\newcommand{\tdot}[3]{\draw [fill=black,color=#3] (#1,#2) circle [radius=0.25];}
\newcommand{\x}{\mathbf{x}}
\DeclareMathOperator{\inv}{inv} 
\DeclareMathOperator{\sgn}{sgn} 
\definecolor{orange}{rgb}{0.898, 0.621, 0.0}
\definecolor{skyblue}{rgb}{0.336, 0.703, 0.910}
\definecolor{bluishgreen}{rgb}{0, 0.617, 0.449}
\definecolor{yellow}{rgb}{0.937, 0.890, 0.258}
\definecolor{newblue}{rgb}{0, 0.245, 0.395}
\definecolor{red}{rgb}{0.832, 0.367, 0}
\definecolor{purple}{rgb}{0.797, 0.473, 0.652}
\title{About the determinant of complete non-ambiguous trees}
\author{Jean-Christophe Aval}
\affiliation{LaBRI, CNRS\\ Universit\'e de Bordeaux\\ France}
\keywords{non-ambiguous tree, permutation, determinant}
\begin{document}
\publicationdata{vol. 26:3}{2024}{18}{10.46298/dmtcs.12850}{2024-01-10; 2024-01-10; 2024-08-08}{2024-10-17}
\maketitle
\begin{abstract}
$ $\\
Complete non-ambiguous trees (CNATs) are combinatorial objects which appear in various contexts.
Recently, Chen and Ohlig studied the notion of permutations associated to these objects, 
and proposed a series of nice conjectures.
Most of them were proved by Selig and Zhu, through a connection with the abelian sandpile model.
But one conjecture remained open, about the distribution of a natural statistic named determinant.
We prove this conjecture, in a bijective way.
\end{abstract}
\section{Introduction}
Non-ambiguous trees (NATs) were first defined in \cite{NAT} and may be seen as a proper way to draw a binary tree on a square grid (see Definition~\ref{def:nat}). They were put to light as a special case of tree-like tableaux, which have been found to have applications in the PASEP model of statistical mechanics \cite{CW,TLT}. 
The initial study of NATs revealed nice properties, mostly in an enumerative context \cite{NAT,NAT2}. This includes enumeration formulas with respect to fixed constraints, and new bijective proofs of combinatorial identities. When the undelying binary tree is {\em complete}, we are led to complete non-ambiguous trees (CNATs).
These objects were first considered in \cite{NAT}, where it was proved that their enumerating sequence is related to the formal power series of the logarithm of the Bessel function of order $0$. An extension to higher dimension was proposed in \cite{Patxi2}.

Recent papers have revealed new facets of these objects. 
In \cite{DGGS19}, striking mathematical cross-connections were obtained, such as a bijection between CNATs
and fully-tiered trees of weight $0$.
In \cite{DSSS18}, CNATs were linked to the abelian sandpile model. In the same article, it was noticed that if we restrict a CNAT to its leaf dots, we obtain a permutation. This link was investigated in \cite{CO23}, where nice properties were derived, and several conjectures proposed. By using the connection with the abelian sandpile model,
a large number of conjectures were proved very recently in \cite{SZ23},
but one conjecture remained open.
It asserted that when considering the set of CNATs of a fixed odd size, 
the number of them having an underlying permutation with even and odd determinant (signature) are equal.
We give a bijective proof of this (Theorem~\ref{theo:main}), and include the case of even size,
which was suggested in~\cite{CO23}.

\section{Definitions and statement of the result}

We first recall the definition of (complete) non-ambiguous trees, as in~\cite{NAT}.

\begin{definition}\label{def:nat}
A \emph{non-ambiguous tree} (NAT) $T$ is a filling of an $m \times n $ rectangular grid, where each cell is either dotted or not, satisfying the following conditions:
\begin{description}
\item[Existence of a root] The top-left cell is dotted; we call it the {\em root} of $T$.
\item[Non ambiguity] Aside from the root, every dotted cell of $T$ has either a dotted cell above it in the same column, or a dotted cell to its left in the same row, but not both.
\item[Minimality] Every row and every column of $T$ contains at least one dotted cell.
\end{description}
\end{definition}

\begin{remark}
The use of the word \emph{tree} to describe these objects comes from the following observation. 
Given a NAT $T$, we connect every dot $d$ different from the root to its \emph{parent} dot $p(d)$, which is the dot immediately above it in the same column, or to its left in the same row (because of the condition of non ambiguity, exactly one of these must exist). 
\end{remark}

\begin{definition}\label{def:cnat}
A \emph{complete non-ambiguous tree} (CNAT) is a NAT
whose underlying tree is complete, {\it i.e.}\ in which every dot has either both a dot below it in the same column and a dot to its right in the same row (in which case the dot is said to be an {\em internal dot}), or neither of these (in which case the dot is said to be a {\em leaf}). 

The \emph{size} of a CNAT is its number of leaf dots, or equivalently one more than its number of internal dots. 

We denote by $\T_n$ the set of CNATs of size $n$ and $T_n=|\T_n|$.
\end{definition}
Figure~\ref{fig:cnat} gives an example of this notion.

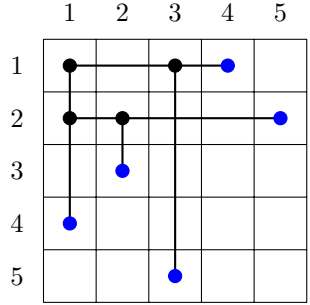
\begin{figure}[ht]
\begin{center}
\begin{tikzpicture}[scale=0.35]

\draw [step=2] (2,2) grid (12,-8);
\foreach \x in {1,...,5}
  \node at (1+2*\x, 3) {$\x$};
\foreach \y in {1,...,5}
  \node at (1, 3-2*\y) {$\y$};
\draw [thick] (5,-3)--(5,-1)--(11,-1);
\draw [thick] (3,-5)--(3,1)--(9,1);
\draw [thick] (7,1)--(7,-7);
\draw [thick] (3,-1)--(5,-1);
\tdot{3}{1}{black}
\tdot{7}{1}{black}
\tdot{3}{-1}{black}
\tdot{5}{-1}{black}
\tdot{3}{-5}{blue}
\tdot{5}{-3}{blue}
\tdot{7}{-7}{blue}
\tdot{9}{1}{blue}
\tdot{11}{-1}{blue}

\end{tikzpicture}
\end{center}
\caption{A CNAT of size $5$. Leaf dots are represented in blue, and internal dots in black. \label{fig:cnat}}
\end{figure}

As in this figure, it will be convenient to label by integers the rows and columns respectively from top to bottom 
and from left to right, in such a way that the root appears in the cell $(1,1)$. 
Moreover, given a dot $d$ in a CNAT, we denote by $c(d)$ and $r(d)$ the label of its column and row.
For a given internal dot, its child in the same row is called its {\em right child} 
and its child in the same column is called its {\em left child}.

\begin{remark}\label{rema:alone}
We may observe that any right leaf $l$ in a CNAT $T$ is the only dot in its column:
there is no dot above $l$ because this would contradict the minimality condition of Definition~\ref{def:cnat},
and there is no dot below $l$ because $l$ is a leaf.
In the same way, any left leaf $l$ in $T$ is the only dot in its row.
\end{remark}

We may see a CNAT $T$ as a matrix $M(T)$ where dotted cells are $1$'s and undotted cells are $0$'s
For example, the CNAT of Figure~\ref{fig:cnat} is encoded by the following matrix: 
$$
\left(
\begin{array}{ccccc}
1&0&1&1&0\cr
1&1&0&0&1\cr
0&1&0&0&0\cr
1&0&0&0&0\cr
0&0&1&0&0\cr
\end{array}
\right).
$$

The number $T_n$ of CNATs of size $n$ appears as the series {\tt A002190} in \cite{oeis}.
As proved in \cite{NAT}, these numbers give a combinatorial interpretation for the development of 
the Bessel function $J_0$.

Let us now introduce the notion of permutation associated to a CNAT.
\begin{definition}
Let $T$ be a CNAT of size $n$.
It is clear that in any column of $T$ the bottom-most dot of is a leaf, as well as the right-most dot of any row.
Thus every row and every column must have exactly one leaf dot.
As such, the set of leaf dots of a CNAT $T$ of size $n$ forms the graphical representation of an $n$-permutation.
We define $\pi(T)$ as the permutation whose $j$-th entry is the row index of the bottommost 
dot that appears in column $j$, and call it the permutation associated to the CNAT $T$.
\end{definition}

For example, the CNAT of Figure~\ref{fig:cnat} has associated permutation $\pi(T) = 43512$.

\begin{definition}
We recall that an {\em inversion} in a permutation $\sigma$ is a pair of entries
$\sigma_i > \sigma_j$ with $i<j$.
The {\em sign} of $\sigma$ is defined by:  $\sgn(\sigma)={(-1)}^{\inv(\sigma)}$, where $\inv(\sigma)$ is the number of inversions of $\sigma$.
\end{definition}

A careful study of permutations associated to CNATs was initiated in \cite{CO23},
where the following proposition was proved.
\begin{proposition}
Let $T$ be a CNAT.
We have:
$$\det M(T) = \sgn \pi(T).$$
\end{proposition}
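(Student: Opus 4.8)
The plan is to read the right-hand side as the effect of sliding the leaf dots onto the diagonal. Writing out the Leibniz expansion,
\[
\det M(T)=\sum_{\sigma\in\S_n}\sgn(\sigma)\prod_{i=1}^n M(T)_{i,\sigma(i)},
\]
a term survives exactly when $(i,\sigma(i))$ is a dot for every $i$, i.e. when the cells $(i,\sigma(i))$ form a placement of $n$ pairwise non-attacking dots. The leaf dots give one such placement, namely $\sigma=\pi(T)$, and the whole point will be that after a single column permutation this is the \emph{only} contribution, with a controlled sign.

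First I would move the leaves onto the diagonal. Write $\pi=\pi(T)$, let $P$ be the permutation matrix of $\pi$, and set $M'=M(T)\,P$, so that $M'_{i,k}=M(T)_{i,\pi(k)}$ and $\det M'=\sgn(\pi)\,\det M(T)$. By construction $M'_{i,i}=M(T)_{i,\pi(i)}=1$, since the leaf of row $i$ lies in column $\pi(i)$; thus $M'$ carries $1$'s all along its diagonal.

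The key step is to show that $M'$ is upper triangular. Suppose $M'_{i,k}=1$ with $i\neq k$; then row $i$ contains a dot in column $\pi(k)$. But $\pi(k)$ is the column of the leaf of row $k$, and that leaf is the unique leaf of its column, hence (as recalled just before the statement) the bottom-most dot of column $\pi(k)$. A second dot of that column, sitting in a row $i\neq k$, must therefore lie strictly above it, that is $i<k$. So every off-diagonal nonzero entry of $M'$ lies strictly above the diagonal, $M'$ is upper triangular with unit diagonal, and hence $\det M'=1$. Combining with $\det M'=\sgn(\pi)\,\det M(T)$ and $\sgn(\pi)=\pm1$ gives $\det M(T)=\sgn(\pi(T))$, as claimed.

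I expect no real obstacle beyond keeping the direction of the column permutation and the orientation of the triangle straight: the entire content is the single observation that each column's leaf is its lowest dot, so reindexing columns by the leaf permutation automatically triangularizes $M(T)$. As a byproduct the same triangularity shows the surviving placement is unique (the permanent of $M(T)$ equals $1$), so in fact no sign cancellation among competing placements is ever needed.
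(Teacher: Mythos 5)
Your proof is correct, but there is nothing in the paper to compare it against: the paper does not prove this proposition at all, it simply quotes it as a result established by Chen and Ohlig in \cite{CO23}. So what you have produced is a self-contained proof of a statement the paper treats as imported. Checking your argument: the fact you rely on --- that the unique leaf of any column is its bottom-most dot, and dually for rows --- is exactly what the paper records just before the proposition, and it does make $M' = M(T)P$ upper triangular with unit diagonal, since a dot of column $\pi(k)$ in a row $i \neq k$ must sit strictly above the leaf in row $k$, forcing $i < k$. The sign bookkeeping is also right: $\det P = \sgn(\pi)$ regardless of whether one takes $P$ or its transpose (as $\sgn \pi = \sgn \pi^{-1}$), so $\det M' = 1$ yields $\det M(T) = \sgn \pi(T)$; for the same reason the argument is insensitive to whether $\pi(T)$ is read off by rows or by columns, which matters only because the paper's running example ($\pi(T)=45312$ for Figure~\ref{fig:cnat}, whose matrix actually has row-reading $45213$) is not careful on this point. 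Your closing remark is a genuine strengthening worth keeping: since the permanent of a $0$/$1$ upper triangular matrix with unit diagonal is $1$ and the permanent is invariant under column permutations, $\pi(T)$ is the \emph{only} permutation whose graph lies inside the dots of $T$, so the Leibniz expansion has a single surviving term and no cancellation argument is ever needed.
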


For $\epsilon=\pm 1$, let us denote by $T(n;\epsilon)$ the number of CNATs of size $n$ with determinant equal to $\epsilon$.
We are now in a position to state the main result of this article.
\begin{theorem}\label{theo:main}
If $n>1$ is odd: 
\begin{equation}
T(n;+1) = T(n;-1) = {T_n \over 2}.
\end{equation}
If $n$ is even (let us set $n=2p$): 
\begin{equation}
T(2p;+1) = {T_{2p}+{(-1)}^{p}T_p \over 2} \text{ and } T(2p;-1) = {T_{2p}-{(-1)}^{p}T_p \over 2}.
\end{equation}
\end{theorem}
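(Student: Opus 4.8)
The plan is to repackage the two cases of \cref{theo:main} as a single signed enumeration. Since $T(n;+1)+T(n;-1)=T_n$, while the proposition identifying $\det M(T)$ with $\sgn\pi(T)$ gives
\[
T(n;+1)-T(n;-1)=\sum_{T\in\T_n}\det M(T)=\sum_{T\in\T_n}\sgn\pi(T)\eqdef D_n,
\]
so that $T(n;\pm1)=(T_n\pm D_n)/2$. Hence the theorem is equivalent to the two evaluations $D_n=0$ for odd $n>1$ and $D_{2p}=(-1)^pT_p$, and everything reduces to computing $D_n$ by a sign-reversing involution.

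The mechanism I would use for the sign change is the elementary fact that exchanging two adjacent rows (dually, two adjacent columns) of a square matrix negates its determinant. So I look for a \emph{canonical} local move on $T$ that transposes two consecutive rows of $M(T)$ while producing a matrix that is again of the form $M(T')$ with $T'\in\T_n$; equivalently, a canonical modification at the level of dots, parents and children. Reading off the non-ambiguity and completeness axioms across two consecutive rows should pin down exactly when such a swap is admissible (roughly, when the two rows are locally independent, neither dot being the parent of a dot in the other and the leaf/internal pattern being compatible). I would then set $\phi(T)$ to be the swap performed at the \emph{smallest} admissible index, and $\phi(T)=T$ when no index is admissible. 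Because the operation is the transposition of two rows, one gets $\det M(\phi(T))=-\det M(T)$ for free, so $\phi$ is sign-reversing off its fixed-point set; the work is to verify that the minimal-index convention makes $\phi$ self-inverse, i.e. that the swap neither creates nor removes a smaller admissible index.

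It then remains to identify the fixed-point set $F_n$ and to sum the signs over it. I expect to show, using the decomposition of a CNAT at its root into a left and a right subtree, that the absence of any admissible move forces a rigid self-symmetry, so that $F_n$ is empty when $n$ is odd (an involutive obstruction cannot exist on an odd number of rows) and, when $n=2p$, that a halving (``fold along the diagonal'') construction gives a bijection between $F_{2p}$ and $\T_p$; in particular $|F_{2p}|=T_p$. Since $D_n=\sum_{T\in F_n}\sgn\pi(T)$ and the total must be $(-1)^pT_p$, every fixed CNAT must have $\sgn\pi(T)=(-1)^p$, which I would read off directly from the symmetry (the associated permutation being a fixed-point-free involution, a product of $p$ transpositions). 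Summing yields $D_n=0$ for odd $n>1$ and $D_{2p}=(-1)^pT_p$, as required.

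The hard part will be the construction and verification of the canonical move together with the exact description of $F_n$. The delicate points are: (i) isolating the precise local condition under which an adjacent-row swap stays inside $\T_n$, since non-ambiguity couples each dot simultaneously to its column above and its row to the left; (ii) proving self-inverseness of the minimal-index choice; and (iii) characterizing $F_n$ and establishing the folding bijection with $\T_p$. I expect (i) and (iii) to carry the real combinatorial content: already at $n=4$ one checks $D_4=(-1)^2T_2=1$, and there one sees several transpose-symmetric CNATs whose leaf permutation is a fixed-point-free involution (for instance those with $\pi=3412$ and $\pi=4321$), whereas only a single CNAT may be fixed; thus $F_{2p}$ is a strict and subtle subfamily of the symmetric CNATs, and a careful recursive or dot-level description — rather than a naive symmetry or subtree-swap argument — will be needed to cut it down to exactly $\T_p$.
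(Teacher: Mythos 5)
Your global strategy coincides with the paper's: reduce the theorem to evaluating the signed sum $D_n=\sum_{T\in\T_n}\sgn\pi(T)$, construct a sign-reversing involution whose elementary move is a row transposition of $M(T)$, and show the fixed points are empty in odd size, number $T_p$ in size $2p$, and all carry sign $(-1)^p$. Indeed the paper's move is exactly a row transposition: a switch of two interacting left leaves (\cref{def:switch}) exchanges two rows of $M(T)$, each of which contains a single leaf dot. The difference --- and it is fatal, not merely technical --- is that you restrict to \emph{adjacent} rows with a minimal-index rule.

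The obstruction is that every CNAT of size $n\ge 3$ admits an admissible adjacent-row swap, so your fixed-point set $F_n$ is empty for all $n\ge 3$; if your $\phi$ were an involution you would conclude $D_n=0$ for all $n\ge 3$, contradicting $D_4=1$, which you yourself compute. To see the claim: every row of a CNAT is either a single dot (necessarily a left leaf) or a left-to-right chain of internal dots ending in a right leaf. If some row $i<n$ is a single leaf, the swap $(i,i+1)$ is always admissible (its column is automatically disjoint from the dots of row $i+1$). Otherwise rows $1,\dots,n-1$ are all chains, and inadmissibility of the swap $(i,i+1)$ forces the head of chain $i+1$ to be the left child of a \emph{non-head} internal dot of chain $i$, for $1\le i\le n-2$; hence chains $1,\dots,n-2$ have length at least $3$, giving at least $2(n-2)+1$ internal dots, while a CNAT of size $n$ has only $n-1$ --- impossible once $n\ge 3$. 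Consequently the minimal-index rule cannot be self-inverse, and here is a concrete failure: take the size-$4$ CNAT $T$ with dots $(1,1),(1,3),(1,4),(2,3),(3,1),(3,2),(4,1)$ (the paper's unique fixed point in size $4$). Its only admissible swap is at rows $(2,3)$, yielding $T''$ with dots $(1,1),(1,3),(1,4),(2,1),(2,2),(3,3),(4,1)$; but in $T''$ the swap at rows $(1,2)$ is admissible, so $\phi(\phi(T))\ne T$. No choice convention can repair this --- the move set itself is too small. The paper's resolution is to allow swaps of \emph{non-adjacent} single-leaf rows (interacting leaves, \cref{def:inter}), which exist whenever $T\notin\A_n$ by \cref{lemm:Phi-crux2}, together with a lexicographically maximal choice of pair that is actually proved stable under the switch (\cref{prop:Phi}). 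The fixed points are then the all-short-leaf CNATs $\A_n$, put in bijection with $\T_p$ by deleting each leaf together with its row and column (\cref{prop:Ap-card}) --- not by a diagonal folding: elements of $\A_{2p}$ are not transpose-symmetric in general, and their permutations (e.g.\ $436521$) need not be involutions --- and their sign $(-1)^p$ comes from the inversion count $\inv\sigma=4\inv\sigma'+p$ (\cref{prop:Ap-sgn}).
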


The odd case corresponds to Conjecture~2.6 in \cite{CO23}, the even case to Remark~2.7 in the same paper.

\section{A bijective proof of Theorem~\ref{theo:main}}

This section is devoted to proving our main result.
This proof is bijective.
More precisely, we shall:
\begin{enumerate}
\item introduce a subset $\A_{2p}\subset\T_{2p}$ of CNATs of even size, with $|\A_{2p}|=T_p$, and such that for any $T\in\A_{2p}$ we have $\sgn \pi(T) = {(-1)}^p$;
\item construct an {\em involution} $\Phi$ on the set of CNATs such that if 
$T$ is not in any of the sets $\A_{2p}$ we have: 
$$\sgn \pi(\Phi(T)) = -\sgn \pi(T).$$
\end{enumerate}

We first introduce a useful notion on the leaves of CNATs.
\begin{definition}
A leaf in a CNAT is said to be {\em short} if its parent is in a cell adjacent to it.
Otherwise the leaf is said to be {\em long}.
Moreover, we denote by $\A_n$ the set of CNATs of size $n$ with only short leaves.
\end{definition}
Figures~\ref{fig:short} illustrates this notion.

\begin{figure}[ht]
\begin{center}
\begin{tikzpicture}[scale=0.35]

\draw [step=2] (2,2) grid (14,-10);
\draw [thick] (5,-3)--(5,-1)--(13,-1);
\draw [thick] (3,-5)--(3,1)--(9,1);
\draw [thick] (7,1)--(7,-9);
\draw [thick] (3,-1)--(5,-1);
\draw [thick] (11,-1)--(11,-7);
\tdot{3}{1}{black}
\tdot{7}{1}{black}
\tdot{3}{-1}{black}
\tdot{5}{-1}{black}
\tdot{11}{-1}{black}
\tdot{3}{-5}{red}
\tdot{5}{-3}{blue}
\tdot{7}{-9}{red}
\tdot{9}{1}{blue}
\tdot{11}{-7}{red}
\tdot{13}{-1}{blue}

\begin{scope}[shift={(22,0)}]

\draw [step=2] (2,2) grid (14,-10);
\draw [thick] (3,-9)--(3,1)--(9,1);
\draw [thick] (3,-3)--(13,-3);
\draw [thick] (3,-7)--(5,-7);
\draw [thick] (7,1)--(7,-1);
\draw [thick] (11,-3)--(11,-5);
\tdot{3}{1}{black}
\tdot{7}{1}{black}
\tdot{3}{-3}{black}
\tdot{11}{-3}{black}
\tdot{3}{-7}{black}
\tdot{3}{-9}{blue}
\tdot{5}{-7}{blue}
\tdot{7}{-1}{blue}
\tdot{9}{1}{blue}
\tdot{11}{-5}{blue}
\tdot{13}{-3}{blue}

\end{scope}
\end{tikzpicture}
\end{center}
\caption{(Left) A CNAT with short leaves in blue and long leaves in red.\\
(Right) An element of $\A_6$. 
\label{fig:short}}
\end{figure}
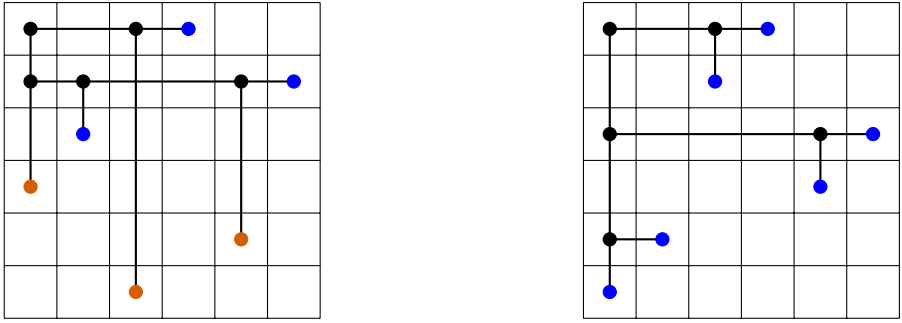

The elements of $\A_p$ are designed to be the fixed points of our involution $\Phi$.
We treat their case with two propositions, for which the technical key point is the following lemma.

\begin{lemma}\label{lemm:Ap-crux}
Let $T$ be a CNAT.
Suppose that $T$ has an internal dot with a leaf and an internal dot as children.
Then $T$ contains at least one long leaf.
\end{lemma}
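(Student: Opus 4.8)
The plan is to prove a sharper, localized statement from which the lemma is immediate: \emph{if $d$ is an internal dot of a CNAT whose two children are one leaf and one internal dot, then the subtree rooted at $d$ already contains a long leaf.} I will argue by strong induction on the number of dots in the subtree of $d$, while keeping all measurements of adjacency (hence the short/long dichotomy) in the ambient grid of the original CNAT. The point of strengthening the statement this way, rather than inducting on the size of the whole CNAT, is that the natural recursive step will land on a dot deeper inside the \emph{same} tree: passing instead to that subtree as a CNAT in its own right would rescale rows and columns and could turn a long leaf into a short one, destroying the very notion I am tracking. Transposing the grid (exchanging rows and columns) is an involution on CNATs that preserves the root, non-ambiguity, minimality, completeness and the short/long status of every leaf, while swapping left and right children; so I may assume without loss of generality that the leaf child of $d$ is its right child $R$ and the internal child is its left child $L$, with $d$ occupying the cell $(r,c)$.

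The geometric engine will be the column blocked by a short leaf. If $R$ is long there is nothing to prove, so suppose $R$ is short; then $R$ occupies the cell $(r,c+1)$ and, by \cref{rema:alone}, it is the only dot of column $c+1$, so every cell $(\rho,c+1)$ with $\rho\neq r$ is empty. The consequence I will exploit is that any internal dot lying in column $c$ strictly below $d$ meets an empty cell immediately to its right, and therefore has its right child in a column of index at least $c+2$; in particular, if that right child happens to be a leaf, it is automatically long.

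I then descend column $c$. Writing $d,L_1,L_2,\dots,L_k$ for the successive dots of column $c$ from $d$ downwards (so $L_1=L$ and each $L_{i+1}$ is the left child of $L_i$), the bottom-most dot $L_k$ is a leaf, whereas $L_1,\dots,L_{k-1}$ are internal (each has a dot below it in its column, so none can be a leaf). If some $L_i$ with $i<k$ has a leaf as right child, that leaf is long by the previous paragraph and lies in the subtree of $d$, which settles this case. Otherwise every such $L_i$ has an internal right child; in particular $L_{k-1}$ has the leaf $L_k$ as its left child and an internal dot as its right child, so $L_{k-1}$ is again a dot with ``one leaf child and one internal child'' whose subtree is strictly contained in that of $d$. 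The induction hypothesis then produces a long leaf inside the subtree of $L_{k-1}$, hence inside the subtree of $d$, completing the step.

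The part I expect to require the most care is exactly the bookkeeping that keeps ``long'' meaningful throughout the recursion: one must fix the ambient grid once and for all, phrase the induction on subtree size (not on the size of the whole tree, and not by reinterpreting subtrees as free-standing CNATs), and verify that the recursive dot $L_{k-1}$ really does have a strictly smaller subtree, which holds because it is a proper descendant of $d$. The remaining ingredients—the transpose symmetry and the elementary ``blocked column'' observations about short right leaves—should be routine once \cref{rema:alone} and the definition of completeness are in hand.
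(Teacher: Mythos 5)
Your proof is correct, but it takes a route organized differently from the paper's, which establishes the same statement with an extremal argument rather than an induction. The paper considers the set $C$ of internal dots having a leaf and an internal dot as children, and chooses $c\in C$ with \emph{no descendant in} $C$; it then walks (in its orientation) along the row of $c$ to the right-most internal dot $c'$ of that row, whose right child is forced to be a leaf, and whose left child is also a leaf precisely because minimality puts $c'$ outside $C$. The punchline is a collision: the left leaves of $c$ and $c'$ cannot both be short, since they would then lie in the same row, contradicting the fact that each row carries exactly one leaf. Your strong induction on subtree size is exactly what substitutes for this extremal choice: your case~2, in which the last internal dot $L_{k-1}$ of the column again has one leaf child and one internal child, is precisely the configuration that the paper's minimality assumption excludes outright, and you handle it by recursing instead. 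Your geometric punchline also differs slightly: instead of colliding two short leaves in one row, you let the short leaf child $R$ of $d$ block all of column $c+1$ (via \cref{rema:alone}), so that any leaf hanging off your column walk is automatically long. Both arguments thus rest on the same key fact (\cref{rema:alone}, one leaf per row and per column). What yours buys is a stronger, localized conclusion -- a long leaf inside the subtree of $d$, with long/short correctly measured in the ambient grid, a point you were right to be careful about -- at the price of setting up the induction; what the paper's buys is brevity, since the extremal choice makes the argument one-shot. In fact you could eliminate your induction entirely by running your own case analysis on a dot of $C$ chosen, inside the subtree of $d$, with no descendant in $C$: your recursive case then cannot occur, and only the blocked-column case remains.
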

\begin{proof}
Let us consider the set $C$ of internal dots with a leaf and an internal dot as children.
Among $C$, we consider an element $c$ which has no descendant in $C$,
{\it i.e. } there is no element in $C$ lower than $c$ in the tree.
By symmetry, we suppose that $c$ has an internal dot as right child, and a leaf as left child.
We refer to Figure~\ref{fig:Ap-crux} which shows only the part of $T$ of interest for the proof.

\begin{figure}[ht]
\begin{center}
\begin{tikzpicture}[scale=0.35]

\draw [step=2] (2,2) grid (16,-2);
\draw [dashed,thick] (3,1)--(11,1);
\draw [thick] (11,-1)--(11,1)--(13,1);
\draw [thick] (5,-1)--(5,1);
\tdot{5}{1}{black}
\tdot{11}{1}{black}
\tdot{13}{1}{blue}
\tdot{11}{-1}{blue}
\tdot{5}{-1}{blue}
\node at (5.5,1.5) {$c$};
\node at (11.6,1.6) {$c'$};
\end{tikzpicture}
\end{center}
\caption{Proof of Lemma~\ref{lemm:Ap-crux}.} 
\label{fig:Ap-crux}
\end{figure}

Consider the right-most internal dot $c'$ in the same row as $c$, which implies that 
its right child is a leaf.
By hypothesis, $c'$ is not in $C$, thus its left child is a leaf.
Then it is impossible for $c$ and $c'$ to have both a short left leaf, 
because these two leaves would lie in the same row.
\end{proof}

\begin{proposition}\label{prop:Ap-card}
When the size $n=2p+1$ is odd, the set $\A_{2p+1}$ is empty.
When the size $n=2p$ is even, the set $\A_{2p}$ is in bijection with $\T_p$.
\end{proposition}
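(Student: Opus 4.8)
The plan is to use Lemma~\ref{lemm:Ap-crux} to pin down the shape of a CNAT all of whose leaves are short, and then to realize both assertions through a single ``doubling'' operation on the grid. First I would record the structural consequence of the lemma: if $T\in\A_n$, then no internal dot can have one leaf child and one internal child, so every internal dot has \emph{either} two leaf children \emph{or} two internal children. In particular the sibling of any leaf is again a leaf, so the leaves split into sibling pairs, each pair being the two children of a common internal dot (a ``cherry''). Hence every $T\in\A_n$ has an even number of leaves, which immediately gives the odd case $\A_{2p+1}=\emptyset$.

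For the even case $n=2p$, I would build the bijection with $\T_p$ by \emph{contracting} cherries. A CNAT of size $n$ lives on an $n\times n$ grid (one leaf per row and per column). Let $c$ be a cherry at cell $(r,k)$; since all leaves are short, its right child is the leaf at $(r,k+1)$ and its left child the leaf at $(r+1,k)$. By Remark~\ref{rema:alone} the right leaf is the only dot of column $k+1$ and the left leaf is the only dot of row $r+1$. Therefore I can delete, for each of the $p$ cherries, the (pairwise distinct) column of its right leaf and the (pairwise distinct) row of its left leaf; this removes exactly the $2p$ leaves and turns every cherry dot into a leaf (it has lost both children and, one checks, has no surviving dot to its right in its row nor below in its column), while leaving every other dot and all their parent relations untouched, since only single-leaf rows and columns are erased and so the left-to-right and top-to-bottom orders of the surviving dots are preserved. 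The result is a CNAT on a $p\times p$ grid with $p$ leaves, i.e. an element of $\T_p$.

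The inverse map is where I would be most careful, and it is cleanest to phrase it as an explicit index doubling. Given $S\in\T_p$ on a $p\times p$ grid, send a dot at $(i,j)$ to the cell $(2i-1,2j-1)$ of a $2p\times 2p$ grid, and for each \emph{leaf} of $S$ at $(i,j)$ add a right leaf at $(2i-1,2j)$ and a left leaf at $(2i,2j-1)$, promoting $(2i-1,2j-1)$ to an internal dot. The even-indexed rows and columns then each carry exactly one of the new leaves, the odd--odd cells carry the image of $S$, and one checks directly that every parent relation of $S$ survives and that each new leaf sits immediately next to its cherry, so the image is a CNAT in $\A_{2p}$. Contracting this CNAT erases exactly the even rows and columns and recovers $S$, and conversely doubling a contracted CNAT recovers the original; hence the two maps are mutually inverse.

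The main obstacle I anticipate is not the combinatorial idea but the verification that the doubling map lands in $\A_{2p}$ and is genuinely inverse to contraction: one must confirm that inserting the $p$ new rows and $p$ new columns never separates an internal dot from a surviving child (which would create a long leaf or break the tree structure) and that the contracted positions reassemble exactly into $S$. The odd/even bookkeeping above should make these checks routine, since the new leaves always occupy the even lines and are adjacent to their parents, while the images of $S$'s dots keep their relative order because every inserted line contains a single removable leaf.
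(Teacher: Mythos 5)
Your proof is correct and follows essentially the same route as the paper: the odd case via Lemma~\ref{lemm:Ap-crux} (every internal dot has two leaf children or two internal children, so leaves pair up into cherries), and the even case via the same contraction that deletes each right leaf's column and each left leaf's row, with the same cherry-adding inverse (your index-doubling description is just an explicit coordinate version of the paper's row/column insertion). The paper in fact states the inverse more tersely than you do, so your extra bookkeeping only adds rigor.
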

\begin{proof}
Let us first consider the case of odd size, and let $T$ be a CNAT of size $n=2p+1$. 
Since $T$ has an odd number of leaves, there must be an internal dot in $T$ which has as children
an internal dot and a leaf.
Because of Lemma~\ref{lemm:Ap-crux}, $T$ contains at least one long leaf.
Thus $\A_{2p+1}$ is empty.

Now, let us suppose that the size is even.
We consider an element $T$ of $\A_{2p}$.
Lemma~\ref{lemm:Ap-crux} implies that any internal dot of $T$ has as children: 
either two internal dots, or two (short) leaves. 
Thus $T$ has exactly $p$ internal dots with two short leaves.
Now we shall erase all the leaves to get an element $T'\in \T_p$.
Thanks to Remark~\ref{rema:alone}, we may erase every right leaf in $T$ together with its column,
and every left leaf in $T$ together with its row.
Let us call this operation $R$.
By doing this, we get an element $T'=R(T)\in \T_p$, see Figure~\ref{fig:reduce}.

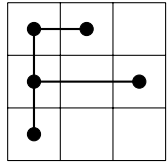
\begin{figure}[ht]
\begin{center}
\begin{tikzpicture}[scale=0.35]

\draw [step=2] (2,2) grid (8,-4);
\draw [thick] (3,-1)--(7,-1);
\draw [thick] (3,-3)--(3,1)--(5,1);
\tdot{3}{1}{black}
\tdot{3}{-3}{black}
\tdot{3}{-1}{black}
\tdot{5}{1}{black}
\tdot{7}{-1}{black}

\end{tikzpicture}
\end{center}
\caption{The operation $R$ applied to the element of $\A_6$  in Figure~\ref{fig:short}~(Right).
\label{fig:reduce}}
\end{figure}

We claim that $R$ is a bijection from $\A_{2p}$ to $\T_p$.
Let us describe the reverse bijection.
Consider an element $T'\in\T_p$. 
For every leaf $l'$ in $T'$,
we first add an empty column just to the right of $l'$
and an empty row just below $l'$,
and then two leaves as the children of $l'$.
It is clear that by doing this, we get an element $T\in\A_{2p}$ such that $R(T)=T'$.
\end{proof}

\begin{proposition}\label{prop:Ap-sgn}
Let $T$ be an element of $\A_{2p}$, its determinant is given by:
$$\sgn \pi(T) = {(-1)}^p.$$
\end{proposition}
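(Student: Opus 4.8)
The plan is to read the permutation $\pi(T)$ directly off the gadget structure of $T$ and then recognise it as an inflation of a smaller permutation, whose sign is easy to control. First I would recall from the proof of Proposition~\ref{prop:Ap-card} that an element $T\in\A_{2p}$ is built from exactly $p$ internal dots each carrying two short leaves: for such an internal dot $c$ the right leaf occupies the cell immediately to the right of $c$ and the left leaf the cell immediately below $c$. Writing $(r(c),c(c))$ for the position of $c$, this places the two leaves at $(r(c),c(c)+1)$ and $(r(c)+1,c(c))$, so that the associated permutation satisfies $\pi(r(c))=c(c)+1$ and $\pi(r(c)+1)=c(c)$. Thus each of the $p$ gadgets governs two consecutive rows and two consecutive columns.

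The key structural step is to show that these pairs do not interleave, so that they partition the index set into consecutive blocks. I would argue this using Remark~\ref{rema:alone}: the left leaf of a gadget is the unique dot of its row and the right leaf is the unique dot of its column. Hence if two gadgets had internal dots $c_1,c_2$ with $r(c_1)<r(c_2)$, the case $r(c_2)=r(c_1)+1$ is impossible, since row $r(c_1)+1$ already consists solely of the left leaf of $c_1$; therefore $r(c_2)\ge r(c_1)+2$ and the $p$ row-pairs $\{r(c),r(c)+1\}$ are pairwise disjoint. Being disjoint pairs of the form $\{a,a+1\}$ covering $\{1,\dots,2p\}$, they must be exactly $\{1,2\},\{3,4\},\dots,\{2p-1,2p\}$. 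The symmetric argument on columns (a right leaf is alone in its column) shows the column-pairs are $\{1,2\},\dots,\{2p-1,2p\}$ as well. Consequently every gadget sends a row-block $\{2k-1,2k\}$ bijectively onto a column-block $\{2j-1,2j\}$ with $\pi(2k-1)=2j$ and $\pi(2k)=2j-1$, and setting $\sigma(k)=j$ yields a permutation $\sigma\in\S_p$ for which $\pi(T)$ is the inflation of $\sigma$ by the single block $21$.

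It then remains to compute the sign of such an inflation, which I expect to be routine once the block decomposition is in hand. Counting inversions blockwise gives $\inv(\pi(T))=p+4\inv(\sigma)$: each of the $p$ length-$2$ blocks contributes its one internal inversion, while each inversion of $\sigma$ contributes the $4$ cross-pairs between the two blocks it relates (and non-inverted pairs of $\sigma$ contribute none, since each block is an interval). Hence $\sgn\pi(T)=(-1)^{p+4\inv(\sigma)}=(-1)^{p}$, as claimed. Equivalently, the leaf permutation matrix is the Kronecker product $P_\sigma\otimes\left(\begin{smallmatrix}0&1\\1&0\end{smallmatrix}\right)$, whose determinant equals $(\det P_\sigma)^2\cdot(-1)^p=(-1)^p$. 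The only delicate point is the non-interleaving argument of the second paragraph; once the consecutive-block partition is established, the sign $(-1)^p$ is forced regardless of the shape of $\sigma$.
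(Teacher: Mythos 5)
Your proof is correct and follows essentially the same route as the paper: both identify $\pi(T)$ as the inflation of a $p$-permutation by the block $21$ (in the paper's notation, $\sigma=(2\sigma'_1)(2\sigma'_1-1)\cdots(2\sigma'_p)(2\sigma'_p-1)$) and conclude via the inversion count $\inv\pi(T)=4\inv\sigma+p$. The only real difference is how the block structure is justified: the paper reads it off the reduction bijection $R$ of Proposition~\ref{prop:Ap-card} and states the relation between $\pi(T)$ and $\pi(R(T))$ as an observation, whereas you derive the consecutive row/column pairing directly from Remark~\ref{rema:alone} --- which in fact makes explicit a step the paper leaves implicit; just note that your disjointness argument should also exclude two gadgets sharing a row, which follows from the same remark since their two left leaves would then occupy the same row.
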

\begin{proof}
We shall use the notations used in the proof of Proposition~\ref{prop:Ap-card},
and consider $T\in\A_{2p}$ and $T'\in\T_p$ with $T'=R(T)$.
We shall also set: $\sigma=\pi(T)$ and $\sigma'=\pi(T')$.
For the example of Figures~\ref{fig:short} and~\ref{fig:reduce}, we have:
$\sigma=436521$ and $\sigma'=231$.
We observe that $\sigma$ and $\sigma'$ are closely related.
If $\sigma'=\sigma'_1\sigma'_2\dots\sigma'_p$,
then 
$$\sigma=(2\sigma'_1)(2\sigma'_1-1)(2\sigma'_2)(2\sigma'_2-1)\dots(2\sigma'_p)(2\sigma'_p-1).$$
Thus any inversion $j>i$ in $\sigma'$ gives rise to four inversions in $\sigma$: $(2j-1)>(2i-1)$, $(2j)>(2i-1)$, $(2j-1)>(2i)$, $(2j)>(2i)$.
To which we have to add $p$ inversions: 
$(2\sigma'_1)>(2\sigma'_1-1)$, $(2\sigma'_2)>(2\sigma'_2-1)$, $\dots$ $(2\sigma'_p)>(2\sigma'_p-1)$.
Thus we are led to the following relation:
$$\inv \sigma = 4 \inv \sigma' + p$$
which implies that $\sgn \sigma = {(-1)}^{p}$.
\end{proof}

We now come to the definition of a function $\Phi$ on $\T_n$, which is the key construction of this work.
We first introduce the following notion.
\begin{definition}\label{def:inter}
Let $T$ be a CNAT. Let $l_1$ and $l_2$ be two leaves in $T$ with respective parents $p_1$ and $p_2$.
If $l_1$ and $l_2$ are both left leaves, they are said to be {\em interacting} if 
$$r(p_1)<r(l_2)<r(l_1) \text{ or } r(p_2)<r(l_1)<r(l_2).$$
The definition is similar for right leaves.
\end{definition}
This notion is illustrated by~Figure \ref{fig:inter}.
\begin{figure}[ht]
\begin{center}
\begin{tikzpicture}[scale=0.35]

\draw [step=2] (2,2) grid (14,-4);
\draw [dashed,thick] (11,1)--(11,-1);
\draw [dashed,thick,red] (5,-1)--(11,-1);
\draw [thick] (5,-3)--(5,1);
\tdot{5}{1}{black}
\tdot{11}{-1}{blue}
\tdot{5}{-3}{blue}
\node at (5.6,1.5) {$p_1$};
\node at (5.6,-2.6) {$l_1$};
\node at (11.6,-0.6) {$l_2$};
\end{tikzpicture}
\end{center}
\caption{Interacting leaves.} 
\label{fig:inter}
\end{figure}
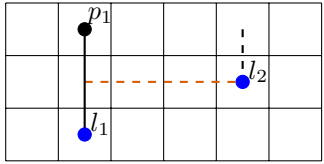

The interest of this notion of interacting leaves is put to light by the following operation.
\begin{definition}\label{def:switch}
For two interacting left leaves $l_1$ and $l_2$ in a CNAT $T$, we define the {\em switch}
of these two leaves as the exchange of the row labels of $l_1$ and $l_2$.
More precisely:
\begin{itemize}
\item we erase $l_1$ and we put a new leaf $l_1'$ in the same column, in row $r(l_2)$;
\item we erase $l_2$ and we put a new leaf $l_2'$ in the same column, in row $r(l_1)$.
\end{itemize}
By doing this, we obtain an object $T'=S(T,l_1,l_2)$.

We have the same operation for right (interacting) leaves.
\end{definition}
\begin{remark}\label{rem:inter-switch}
An easy observation is that after switching, $l_1'$ and $l_2'$ are interacting leaves in $T'$.
\end{remark}
This notion is illustrated by Figure~\ref{fig:switch}.
\begin{figure}[ht]
\begin{center}
\begin{tikzpicture}[scale=0.35]

\draw [step=2] (2,2) grid (14,-6);
\draw [thick] (5,-5)--(5,1);
\draw [thick] (11,-3)--(11,-1);
\tdot{5}{1}{black}
\tdot{11}{-1}{black}
\tdot{5}{-5}{blue}
\tdot{11}{-3}{blue}
\node at (5.6,-4.6) {$l_1$};
\node at (11.6,-2.6) {$l_2$};

\begin{scope}[shift={(22,0)}]
\draw [step=2] (2,2) grid (14,-6);
\draw [thick] (5,-3)--(5,1);
\draw [thick] (11,-5)--(11,-1);
\tdot{5}{1}{black}
\tdot{11}{-1}{black}
\tdot{5}{-3}{blue}
\tdot{11}{-5}{blue}
\node at (5.6,-2.6) {$l'_1$};
\node at (11.6,-4.6) {$l'_2$};
\end{scope}
\end{tikzpicture}
\end{center}
\caption{Switching interacting leaves. 
\label{fig:switch}}
\end{figure}
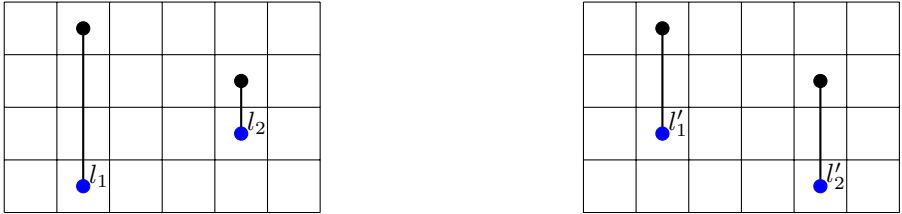
\begin{proposition}\label{prop:switch}
Let $T$ be in $\T_n$.
For two interacting leaves $l_1$ and $l_2$ in $T$, $S(T,l_1,l_2)$ is in $\T_n$.
\end{proposition}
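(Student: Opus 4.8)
The plan is to verify directly that $S(T,l_1,l_2)$ meets the three defining conditions of Definition~\ref{def:nat} together with the completeness condition of Definition~\ref{def:cnat}. I will treat the case of two left leaves, the case of right leaves following by transposing the grid. The key initial observation is that, by Remark~\ref{rema:alone}, each of $l_1$ and $l_2$ is the only dot in its row; hence the switch moves no dot other than $l_1$ and $l_2$, and in fact $S(T,l_1,l_2)$ is nothing but the matrix $M(T)$ with its rows $r(l_1)$ and $r(l_2)$ transposed. In particular the root is untouched, and transposing two rows only permutes the row sums of $M(T)$ while leaving the column sums fixed, so every row and every column still carries at least one dot; minimality is thus immediate. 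It also follows that the underlying abstract binary tree is unchanged — the parent--leaf pairs $p_1\to l_1'$ and $p_2\to l_2'$ replace $p_1\to l_1$ and $p_2\to l_2$ — so once the grid realization is shown valid, completeness and the preservation of the size $n$ come for free.

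By the symmetry of Definition~\ref{def:inter} I may assume $r(p_1)<r(l_2)<r(l_1)$. The new leaf $l_1'$ sits in column $c(l_1)$ at row $r(l_2)$, and $l_2'$ sits in column $c(l_2)$ at row $r(l_1)$. Since rows $r(l_1)$ and $r(l_2)$ contained only $l_1$ and $l_2$, every dot lying in a column other than $c(l_1)$ and $c(l_2)$ keeps both its position and its entire column of vertical neighbours, so its parent, leaf, and internal status is unchanged. It therefore remains only to analyse the two affected columns.

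The crux is to check that $l_1'$ and $l_2'$ are genuine leaves whose parents are still $p_1$ and $p_2$. As $p_1$ is the parent of $l_1$, no dot of $T$ lies in column $c(l_1)$ strictly between rows $r(p_1)$ and $r(l_1)$, and $l_1$ is the bottom-most dot of that column; since $r(p_1)<r(l_2)<r(l_1)$, the dot $p_1$ is then immediately above $l_1'$ and nothing lies below $l_1'$, so $l_1'$ is a left leaf with parent $p_1$. For column $c(l_2)$, the dot $l_2$ was the bottom-most one with $p_2$ immediately above it, so after deleting $l_2$ no dot lies below $p_2$; placing $l_2'$ at row $r(l_1)>r(l_2)>r(p_2)$ keeps it bottom-most with $p_2$ immediately above, so $l_2'$ is a left leaf with parent $p_2$. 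Finally each of $l_1',l_2'$ remains the unique dot in its row, hence has a parent above it but none to its left and no right-neighbour; this confirms both non-ambiguity and leaf status, and shows $p_1,p_2$ retain both their children and stay internal. Every condition holds, so $S(T,l_1,l_2)\in\T_n$.

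The step I expect to be most delicate is exactly this last verification of the parent relations: one must ensure that relocating the leaves neither lets another dot slip between a leaf and its intended parent, nor creates a dot below a relocated leaf. This is precisely where the interacting hypothesis enters — the inequalities $r(p_1)<r(l_2)<r(l_1)$ force $l_1'$ into the dot-free gap just below $p_1$ and force $l_2'$ strictly below the former position of $l_2$, so both relocated leaves land in the bottom-most admissible cell of their respective columns.
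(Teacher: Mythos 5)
Your proof is correct and rests on exactly the same key point as the paper's: by Remark~\ref{rema:alone} each left leaf is the sole dot in its row, so the switch is just a transposition of two rows of $M(T)$ and non-ambiguity (the only non-trivial condition) follows. The paper states this in two lines; you have simply carried out the verification in full detail, including the parent relations and completeness that the paper treats as immediate.
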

\begin{proof}
The only condition in Definition~\ref{def:nat} that is not trivially satisfied in non ambiguity.
This is a direct consequence of Remark~\ref{rema:alone}.
\end{proof}

The technical part of the construction of $\Phi$ now relies on the two following lemmas.
\begin{lemma}\label{lemm:Phi-crux1}
Consider a CNAT $T$.
Suppose that there is an internal dot which holds as children a leaf and an internal dot.
Then $T$ has at least two interacting leaves.
\end{lemma}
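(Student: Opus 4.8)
The plan is to recycle the exact configuration produced in the proof of Lemma~\ref{lemm:Ap-crux}, and to realise the two interacting leaves as two left leaves (or two right leaves) whose parents sit in a common row. First I would set, just as there, $C$ to be the set of internal dots having one leaf child and one internal child. By hypothesis $C\neq\emptyset$, so I may pick $c\in C$ having no descendant in $C$, and --- up to transposing the grid, which swaps rows with columns and left leaves with right leaves --- assume that $c$ has an internal dot as right child and a leaf $\ell_c$ as its left child.

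Next I would look at the dots lying in the row $r(c)$. The non-ambiguity clause of Definition~\ref{def:nat} forces every non-root dot to have a dotted neighbour either above it in its column or to its left in its row, \emph{but not both}; reading a fixed row from left to right this means its dots form a single chain of right-child edges starting from the leftmost dot of the row. In particular the rightmost dot of row $r(c)$ is a (right) leaf, the rightmost internal dot $c'$ of that row is its parent, and every dot of row $r(c)$ strictly to the right of $c$ --- in particular $c'$ --- is a descendant of $c$. Since the right child of $c$ is internal, $c$ is not itself the rightmost internal dot of its row, hence $c'\neq c$ and $c(c)<c(c')$; and since $c'$ is a proper descendant of $c$, whereas $c$ has no descendant in $C$, we get $c'\notin C$. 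Being the rightmost internal dot of the row, $c'$ has a leaf as right child, and being outside $C$ it must also have a leaf as left child; call this left leaf $\ell_{c'}$. (This is exactly the step already carried out in the proof of Lemma~\ref{lemm:Ap-crux}, so I can quote it rather than redo it.)

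Finally I would check that $\ell_c$ and $\ell_{c'}$ interact. They are two left leaves sitting in the distinct columns $c(c)<c(c')$, and their parents $c$ and $c'$ both lie in row $r(c)$. By Remark~\ref{rema:alone} a left leaf is the only dot of its row, so $\ell_c$ and $\ell_{c'}$ occupy two \emph{distinct} rows, both strictly below $r(c)$. Whichever of $r(\ell_c),r(\ell_{c'})$ is the smaller then lies strictly between $r(c)$ and the larger one, and since both parents share the row $r(c)$ this is precisely one of the two alternatives $r(c)<r(\ell_{c'})<r(\ell_c)$ or $r(c)<r(\ell_c)<r(\ell_{c'})$ of Definition~\ref{def:inter}. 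Hence $\ell_c$ and $\ell_{c'}$ are interacting, and $T$ has at least two interacting leaves.

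The genuinely load-bearing observations are the structural fact that the dots of a single row form one right-child chain (so that $c'$ is a descendant of $c$ and thus lies outside $C$) and, above all, the elementary remark that two distinct left leaves whose parents share a row are \emph{automatically} interacting. This last remark is what reduces the whole statement to merely producing such a pair, and that pair is handed to us for free by the Lemma~\ref{lemm:Ap-crux} configuration; so I expect the only care needed is in spelling out the row-chain structure and the $c'\neq c$ bookkeeping rather than in any substantive new idea.
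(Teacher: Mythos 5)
Your proof is correct, but it follows a genuinely different route from the paper's. The paper proves Lemma~\ref{lemm:Phi-crux1} by contradiction: assuming no two leaves interact, it starts from the given dot $p_1$ and iterates --- $p_2$ the right-most internal dot in the row of $p_1$, whose left child is then forced to be internal (a leaf there would interact with $l_1$), then $p_3$ the bottom-most internal dot in the column of $p_2$, whose right child is forced to be internal for the same reason, and so on --- producing an infinite sequence of internal dots, which is absurd in a finite grid. You instead import the minimal-element device from the proof of Lemma~\ref{lemm:Ap-crux}: choosing $c\in C$ with no descendant in $C$ makes the argument terminate after a single step, since the right-most internal dot $c'$ of the row of $c$ is a proper descendant of $c$ (via the row-chain structure you correctly derive from the non-ambiguity condition), hence lies outside $C$ and must have two leaf children; the left leaves of $c$ and $c'$ then interact automatically because their parents share a row. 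Both proofs rest on the same two local facts (the right-most internal dot of a row has a leaf as right child; two left leaves whose parents share a row interact), but yours is direct and constructive --- it actually exhibits the interacting pair --- and replaces the paper's infinite-descent bookkeeping by the well-foundedness already exploited in Lemma~\ref{lemm:Ap-crux}, thereby unifying the two lemmas around a single configuration. The modest price is that you must spell out the row-chain observation and the $c'\neq c$, $c'\notin C$ bookkeeping, all of which you handle correctly.
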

\begin{proof}
We refer to Figure~\ref{fig:Phi-crux1}.
\begin{figure}[ht]
\begin{center}
\begin{tikzpicture}[scale=0.35]

\draw [step=2] (2,2) grid (22,-12);
\draw [dashed,thick] (3,1)--(11,1);
\draw [dashed,thick] (11,-3)--(11,-9)--(17,-9);
\draw [dashed,thick] (17,-9)--(17,-11);
\draw [thick] (11,-1)--(11,1)--(15,1);
\draw [thick] (5,-1)--(5,1);
\draw [thick] (5,1)--(7,1);
\draw [thick] (11,1)--(11,-3);
\draw [thick] (11,-9)--(11,-11);
\draw [thick] (17,-9)--(19,-9);
\tdot{5}{1}{black}
\tdot{7}{1}{black}
\tdot{11}{1}{black}
\tdot{11}{-3}{black}
\tdot{11}{-9}{black}
\tdot{17}{-9}{black}
\tdot{15}{1}{blue}
\tdot{5}{-1}{blue}
\tdot{11}{-11}{blue}
\tdot{19}{-9}{blue}
\node at (5.5,1.5) {$p_1$};
\node at (11.6,1.5) {$p_2$};
\node at (11.6,-8.5) {$p_3$};
\node at (17.6,-8.5) {$p_4$};
\node at (5.6,-0.5) {$l_1$};
\node at (15.6,1.5) {$l_2$};
\node at (11.6,-10.5) {$l_3$};
\node at (19.6,-8.5) {$l_4$};
\end{tikzpicture}
\end{center}
\caption{Proof of Lemma~\ref{lemm:Phi-crux1}.} 
\label{fig:Phi-crux1}
\end{figure}
By symmetry, we may assume that we have an internal dot $p_1$ with a left leaf $l_1$ and an internal dot as right child.

Let us suppose that we do not have interacting leaves.
Let us denote by $p_2$ the right-most internal dot in the same row as $p_1$.
Then the right child of $p_2$ has to be a leaf $l_2$.
And its left child has to be an internal dot: if it was a leaf, this leaf would be interacting with $l_1$.
We may now iterate, and consider $p_3$ the bottom-most internal dot in the same column as $p_2$.
For the same reason, $p_3$ holds as children: a left leaf $l_3$ and an internal dot as right child.
We are thus led to an infinite series of internal dots in $T$, which is absurd.

Thus there are at least two intersecting leaves.
\end{proof}

\begin{lemma}\label{lemm:Phi-crux2}
Any CNAT with a long leaf has at least two interacting leaves.
\end{lemma}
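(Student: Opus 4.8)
The goal is to prove \cref{lemm:Phi-crux2}: any CNAT with a long leaf has at least two interacting leaves. The natural plan is to reduce this statement to \cref{lemm:Phi-crux1}, which already provides interacting leaves whenever some internal dot has a leaf and an internal dot as its two children. So the strategy is to argue that the \emph{existence of a long leaf} forces such a ``mixed'' internal dot to appear somewhere in the tree, unless we can produce interacting leaves directly.

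First I would fix a long leaf $l$ in the CNAT $T$; by symmetry assume $l$ is a long left leaf, so its parent $p$ lies in the same row but \emph{not} in the adjacent cell. By \cref{rema:alone}, a left leaf is the only dot in its row, so between $p$ and $l$ in that row there are empty cells. I would then walk along the row from $p$ towards $l$ and examine the columns strictly between them: since $l$ is long, there is at least one such intermediate column, and each intermediate cell in the row of $l$ is empty. The key geometric observation to extract is that in one of these intermediate columns there must be a dot (by minimality, \cref{def:nat}, every column contains a dot), and I would track what kind of dot sits there and how it relates to $l$.

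The heart of the argument is a dichotomy. Either somewhere along the way we encounter an internal dot whose children are one leaf and one internal dot --- in which case \cref{lemm:Phi-crux1} immediately gives two interacting leaves and we are done --- or every relevant internal dot has children of the same type (two leaves or two internal dots), and I would show this latter situation produces a second left leaf $l'$ positioned so that $l$ and $l'$ are interacting in the sense of \cref{def:inter}, i.e. with $r(p)<r(l')<r(l)$ or the symmetric inequality. Concretely, an intermediate column between $p$ and $l$ has a bottom-most dot which is a leaf; I expect to show that this leaf, or a leaf reachable from the internal dots in that column, lands in a row that is ``sandwiched'' by $r(p)$ and $r(l)$, yielding the interacting pair directly from the definition.

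The main obstacle I anticipate is the bookkeeping of the second case: confirming that when no mixed internal dot exists, the dots forced into the intermediate columns by minimality actually yield a leaf whose row satisfies the strict interacting inequalities, rather than a leaf lying outside the interval $(r(p), r(l))$. Handling this cleanly will likely require a descent/iteration argument analogous to the infinite-descent contradiction used in the proof of \cref{lemm:Phi-crux1}, following parent/child links downward or rightward until a leaf in the correct row range is pinned down. If that descent cannot be closed, the fallback is to invoke \cref{lemm:Phi-crux1} via the mixed-children dot it produces, so the proof should split cleanly along whether such a mixed dot exists on the path from $p$ to $l$.
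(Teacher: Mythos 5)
Your high-level skeleton does match the paper's proof --- either exhibit a leaf that interacts with the long leaf $l$ directly, or exhibit an internal dot with a leaf and an internal dot as children and invoke \cref{lemm:Phi-crux1} --- but the proposal as written has two genuine problems. The first is an orientation error: in this paper a \emph{left} leaf is the child lying in the same \emph{column} as its parent (and, by \cref{rema:alone}, it is the only dot in its \emph{row}), so for a long left leaf $l$ the parent $p$ sits two or more cells \emph{above} $l$, not to its left in the same row. The relevant auxiliary objects are therefore the rows strictly between $r(p)$ and $r(l)$, each of which contains exactly one leaf; your walk ``along the row from $p$ towards $l$'' and the appeal to minimality of the intermediate \emph{columns} point in the wrong direction. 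This is repairable by swapping left and right throughout, but it signals that the setup was never pinned down precisely enough to run the argument.

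The more serious issue is that the case you yourself flag as the ``main obstacle'' is exactly the step the proof needs, and you leave it open: you say you ``expect to show'' that some reachable leaf lands in the sandwiched range, and otherwise anticipate a descent/iteration argument. No descent is needed; the paper closes this case in two lines via a specific choice you never make, namely the row $r(l)-1$ immediately above $l$. Its unique leaf $l'$ is either a left leaf, in which case $r(p)<r(l')=r(l)-1<r(l)$ makes $l$ and $l'$ interacting at once; or a right leaf with parent $p'$ in row $r(l)-1$, in which case one inspects the left child of $p'$: if it is internal, $p'$ is a mixed dot and \cref{lemm:Phi-crux1} applies; if it is a leaf $l''$, then $r(l'')>r(l)-1$, and $r(l'')\neq r(l)$ because $l$ is alone in its row by \cref{rema:alone}, hence $r(l'')>r(l)$ and the pair $l,l''$ is interacting since $r(p')<r(l)<r(l'')$. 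That use of \cref{rema:alone} to push $l''$ strictly past row $r(l)$ is precisely the bookkeeping your second case is missing; without it (or the analogous observation for whatever intermediate row or column you choose), the proposal remains a plan rather than a proof.
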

\begin{proof}
Consider a CNAT $T$ with a long leaf $l$.
We suppose that $l$ is a left leaf, and denote by $p$ its parent.
Since $l$ is a long leaf, we have: $r(l)-r(p)\ge 2$.
We examine the row with label $r(l)-1$.
This row has to contain a leaf $l'$.
If $l'$ is a left leaf, we are done (this is the case illustrated by Figure~\ref{fig:inter}).
If $l'$ is a right leaf, we call its parent $p'$ (see Figure~\ref{fig:Phi-crux2}).

\begin{figure}[ht]
\begin{center}
\begin{tikzpicture}[scale=0.35]

\draw [step=2] (0,0) grid (14,-10);
\draw [thick] (3,-9)--(3,-5)--(7,-5);
\draw [thick] (11,-1)--(11,-7);
\tdot{3}{-5}{black}
\tdot{11}{-1}{black}
\tdot{11}{-7}{blue}
\tdot{7}{-5}{blue}
\tdot{3}{-9}{blue}
\node at (11.6,-0.6) {$p$};
\node at (3.6,-4.6) {$p'$};
\node at (11.6,-6.6) {$l$};
\node at (7.6,-4.6) {$l'$};
\node at (3.6,-8.6) {$l''$};
\end{tikzpicture}
\end{center}
\caption{Proof of Lemma~\ref{lemm:Phi-crux2}.} 
\label{fig:Phi-crux2}
\end{figure}

We have $r(p')=r(l')=r(l)-1$.
If the left child of $p'$ is a leaf $l''$ then $r(l'')>r(l)$ which implies that $l$ and $l''$ are interacting.
And if the left child of $p'$ is an internal dot, we are in the case of Lemma~\ref{lemm:Phi-crux1}
which asserts that $T$ contains two interacting leaves.
\end{proof}

Let us now make precise the construction of $\Phi:\T_n\longrightarrow\T_n$. 
First of all, we define $\Phi(T)=T$ for any $T\in\A_{n}$.
With this done, we are reduced to the case where $T$ has at least one long leaf.
By Lemma~\ref{lemm:Phi-crux2}, $T$ contains interacting leaves.
To define $\Phi$ for such a $T$, we want to {\em choose} a pair of interacting leaves.
Since the set of interacting leaves may change when we switch leaves,
we have to choose in such a way that we create an involution.
If $T$ contains left interacting leaves, we consider the (non-empty) set 
$\{(r(l_1),r(l_2)):\ l_1 \text{ and } l_2 \text{ interacting}\}$
and choose $l_1$ and $l_2$ which correspond to the lexicographical maximum of this set.
Let us call these interacting leaves {\em active}.
This done, we set $\Phi(T)=S(T,l_1,l_2)$.
And if $T$ contains only right interacting leaves, we consider the lexicographical maximum
of $\{(c(l_1),c(l_2));\ l_1 \text{ and } l_2 \text{ interacting}\}$ to choose the pair of active leaves.

\begin{proposition}\label{prop:Phi}
The function $\Phi$ is an involution on $\T_n$.
Moreover, if $T\not\in\A_n$ then $\sgn \pi(\Phi(T)) = -\sgn \pi(T)$.
\end{proposition}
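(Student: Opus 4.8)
The plan is to establish the two assertions separately, disposing of the sign change first. For $T\notin\A_n$ the map $\Phi$ performs a single switch of an active pair. Suppose the active pair consists of left leaves $l_1,l_2$, lying in columns $c_1,c_2$. Reading $\pi(T)$ as the bijection that sends each column to the row of its unique leaf, the switch replaces the two values $r(l_1),r(l_2)$ assigned to $c_1,c_2$ by $r(l_2),r(l_1)$ and fixes every other value; as $r(l_1)\neq r(l_2)$ this is multiplication by a transposition, whence $\sgn\pi(\Phi(T))=-\sgn\pi(T)$. The right-leaf case is identical after exchanging the roles of rows and columns. So the sign statement reduces to the single remark that a switch is a transposition of the associated permutation.

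The real work is to prove that $\Phi$ is an involution. The elements of $\A_n$ are fixed by definition, so I may assume $T\notin\A_n$; then $T$ has a long leaf and hence interacting leaves by Lemma~\ref{lemm:Phi-crux2}. By the row/column symmetry I treat only the case in which $T$ has left interacting leaves, so that $\Phi$ is a left switch. Let the active pair be $l_1,l_2$ with parents $p_1,p_2$, where $l_1$ sits in row $a$ and $l_2$ in row $b$; by the lexicographic rule $a$ is the largest row occurring in any interacting pair and $b<a$ is the largest partner-row of the row-$a$ leaf, so that $r(p_1)<b<a$ and $r(p_2)<b$. Put $T'=\Phi(T)=S(T,l_1,l_2)$, whose images are $l_1'$ in row $b$ and $l_2'$ in row $a$. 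The organizing observation is that a left switch alters only the rows of $l_1,l_2$: by Remark~\ref{rema:alone} each is alone in its row, and being the unique leaf of its column it is the bottom dot there, so moving them within their own columns leaves every other leaf, together with all parent relations, untouched. Consequently each interacting pair of $T'$ that avoids $\{l_1',l_2'\}$ is an interacting pair of $T$ avoiding $\{l_1,l_2\}$, and conversely.

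It remains to prove that $\{l_1',l_2'\}$ is again the active pair of $T'$; granting this, a second switch restores $T$, and since $\{l_1',l_2'\}$ is interacting its lower leaf $l_2'$ is long (from $r(p_2)<b<a$), so $T'\notin\A_n$ and $\Phi$ indeed switches it. First, $\{l_1',l_2'\}$ is interacting in $T'$ because $r(p_2)<b<a$. The delicate point is that the lowest active leaf changes parent, from $l_1$ with parent $p_1$ to $l_2'$ with parent $p_2$, and one must check the lexicographic maximum survives this change. For the first coordinate: no leaf of $T$ with row $>a$ takes part in any interaction, by maximality of $a$, and the switch creates no such interaction, since any interaction of $l_2'$ (row $a$) or $l_1'$ (row $b$) with a leaf at row $>a$ is governed by precisely the condition that failed for $l_1$, respectively $l_2$, in $T$; hence $a$ remains the largest interacting row in $T'$, now realised by $l_2'$. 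For the second coordinate: the partners of $l_2'$ are the left leaves with row in $(r(p_2),a)$, and since $(b,a)\subseteq(r(p_1),a)\cap(r(p_2),a)$ while $b$ was already the largest partner-row of $l_1$, no left leaf has row in the open interval $(b,a)$; thus $b$ is the largest partner-row of $l_2'$ as well. Therefore $\{l_1',l_2'\}$ is the active pair of $T'$ and $\Phi(\Phi(T))=T$, the right case following by symmetry. This last paragraph is the main obstacle: because the parent of the lowest active leaf is replaced during the switch, the argument hinges on the fact that only rows strictly between $b$ and $a$ can affect the lexicographic choice, and these lie inside both parents' intervals.
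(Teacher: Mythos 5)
Your proof is correct and takes essentially the same route as the paper: both reduce the involution property to showing that the lexicographically maximal (active) pair of $T$ is again the active pair of $\Phi(T)$, the crux in each case being that lexicographic maximality forbids any left leaf whose row lies strictly between the rows of the two active leaves. Your write-up is in fact more detailed than the paper's, which records only this interval argument (as a three-leaf transitivity lemma) and leaves the remaining verifications---that parents are unchanged by the switch and that no interaction involving a row larger than $r(l_1)$ can appear in $\Phi(T)$---to the reader.
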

\begin{proof}
By definition of an involution, we want to prove that for any $T\in\T_n$: 
$$\Phi\circ\Phi(T)=T.$$

If $T\in\A_n$, then it is trivial.

Let us consider $T\not\in\A_n$, and set $T'=\Phi(T)$.
We want to prove that if $l_1$ and $l_2$ are the two active leaves in $T$, then $l'_1$ and $l'_2$ are the two active leaves in $T'$.
By Remark \ref{rem:inter-switch}, we have that $l'_1$ and $l'_2$ are interacting 
leaves in $T'$.

By symmetry, we focus on the case where we have left leaves.
We want to prove that if we consider 3 left leaves $l_1$, $l_2$ and $l_3$
such that $r(l_1)>r(l_3)>r(l_2)$ then if $l_1$ and $l_2$ are interacting then $l_1$ and $l_3$ are also interacting.
First of all, since $r(l_2)<r(l_1)$ the fact that $l_1$ and $l_2$ are interacting implies that
$r(p_1)<r(l_2)<r(l_1)$ ($p_1$ denotes the parent of $l_1$).
Whence $r(p_1)<r(l_3)<r(l_1)$, which was to be proved.
This implies that if $l_1$ and $l_2$ are the two active leaves in $T$, then $l'_1$ and $l'_2$ are the two active leaves in $T'$.

This implies that when we apply $\Phi$ to $T'$, we switch $l'_1$ and $l'_2$, and we get:
$\Phi(T')=\Phi\circ\Phi(T)=T$.
Thus $\Phi$ is an involution.

Now the assertion about the determinant comes from the easy observation that
$\pi(\Phi(T))$ and $\pi(T)$ differ by exactly a transposition.
\end{proof}

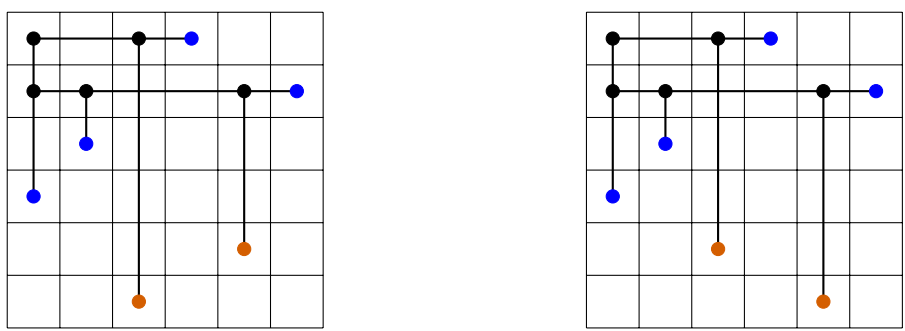
\begin{figure}[hb]
\begin{center}
\begin{tikzpicture}[scale=0.35]

\draw [step=2] (2,2) grid (14,-10);
\draw [thick] (5,-3)--(5,-1)--(13,-1);
\draw [thick] (3,-5)--(3,1)--(9,1);
\draw [thick] (7,1)--(7,-9);
\draw [thick] (3,-1)--(5,-1);
\draw [thick] (11,-1)--(11,-7);
\tdot{3}{1}{black}
\tdot{7}{1}{black}
\tdot{3}{-1}{black}
\tdot{5}{-1}{black}
\tdot{11}{-1}{black}
\tdot{3}{-5}{blue}
\tdot{5}{-3}{blue}
\tdot{7}{-9}{red}
\tdot{9}{1}{blue}
\tdot{11}{-7}{red}
\tdot{13}{-1}{blue}

\begin{scope}[shift={(22,0)}]

\draw [step=2] (2,2) grid (14,-10);
\draw [thick] (5,-3)--(5,-1)--(13,-1);
\draw [thick] (3,-5)--(3,1)--(9,1);
\draw [thick] (7,1)--(7,-7);
\draw [thick] (3,-1)--(5,-1);
\draw [thick] (11,-1)--(11,-9);
\tdot{3}{1}{black}
\tdot{7}{1}{black}
\tdot{3}{-1}{black}
\tdot{5}{-1}{black}
\tdot{11}{-1}{black}
\tdot{3}{-5}{blue}
\tdot{5}{-3}{blue}
\tdot{7}{-7}{red}
\tdot{9}{1}{blue}
\tdot{11}{-9}{red}
\tdot{13}{-1}{blue}

\end{scope}
\end{tikzpicture}
\end{center}
\caption{A CNAT and its image under $\Phi$. Active leaves appear in red. 
\label{fig:Phi}}
\end{figure}

Figure~\ref{fig:Phi} shows an example of the application of $\Phi$.

We can now conclude the proof of our main result.

\begin{proof}[of Theorem~\ref{theo:main}]
It is a consequence of Propositions~\ref{prop:Ap-card}, \ref{prop:Ap-sgn} and \ref{prop:Phi}.
\end{proof}

\printbibliography

\end{document}